\newtheorem{theorem}{Theorem}[section]
\newtheorem{lemma}[theorem]{Lemma}
\newtheorem{remark}[theorem]{Remark}
\newtheorem{example}[theorem]{Example}
\begin{document}
\textwidth 150mm \textheight 225mm
\title{Iota energy orderings of bicyclic signed digraphs \thanks{Supported by the National Natural Science Foundation of China (No. 11871398), the Natural Science Basic Research Plan in Shaanxi Province of China (Program No. 2018JM1032), the Fundamental Research Funds for the Central Universities (No. 3102019ghjd003), and the Seed Foundation of Innovation and Creation for Graduate Students in Northwestern Polytechnical University (No. ZZ2019031).}}
\author{{Xiuwen Yang$^{a,b}$, Ligong Wang$^{a,b}$\footnote{Corresponding author.}}\\
{\small $^{a}$ School of Mathematics and Statistics, Northwestern Polytechnical University,}\\{\small  Xi'an, Shaanxi 710129, P.R. China.}\\ {\small $^{b}$ Xi'an-Budapest Joint Research Center for Combinatorics, Northwestern Polytechnical University,}\\{\small  Xi'an, Shaanxi 710129, P.R. China.}\\{\small E-mail: yangxiuwen1995@163.com, lgwangmath@163.com}}
\date{}
\maketitle
\begin{center}
\begin{minipage}{135mm}
\vskip 0.3cm
\begin{center}
{\small {\bf Abstract}}
\end{center}
{\small The concept of energy of a signed digraph is extended to iota energy of a signed digraph. The energy of a signed digraph $S$ is defined by $E(S)=\sum_{k=1}^n|\textnormal{Re}(z_k)|$, where $\textnormal{Re}(z_k)$ is the real part of eigenvalue $z_k$ and $z_k$ is the eigenvalue of the adjacency matrix of $S$ with $n$ vertices, $k=1,2,\ldots,n$. Then the iota energy of $S$ is defined by $E(S)=\sum_{k=1}^n|\textnormal{Im}(z_k)|$, where $\textnormal{Im}(z_k)$ is the imaginary part of eigenvalue $z_k$. In this paper, we consider a special graph class for bicyclic signed digraphs $\mathcal{S}_n$ with $n$ vertices which have two vertex-disjoint signed directed even cycles. We give two iota energy orderings of bicyclic signed digraphs, one is including two positive or two negative directed even cycles, the other is including one positive and one negative directed even cycles. \vskip 0.1in \noindent {\bf Key Words}: \ Orderings; Iota energy; Bicyclic signed digraphs \vskip
0.1in \noindent {\bf AMS Subject Classification (2010)}: \ 05C35, 05C50 }
\end{minipage}
\end{center}

\section{Introduction }

Graph energy was defined as the sum of the absolute values of all eigenvalues of the adjacency matrix of a graph. In 1978, Gutman \cite{Gut} first introduced the concept of energy of a graph. Since then, more and more scholars pay attention to graph energy. At present, the studies about other energies of graphs mainly include (signless) Laplacian energy \cite{PiTr,DaFP}, matching energy \cite{GuWa,LiWX}, distance energy \cite{DiRo,Il}, etc. More research results can be found in the papers \cite{DaGu,Gut01} and two good books  \cite{GuLi,LiSG}.

In 2010, Adiga et al. \cite{AdBS} extended the energy of a graph to skew energy of an oriented graph. Thus the skew energy was defined as the sum of the absolute values of all eigenvalues of the skew adjacency matrix of an oriented graph. In 2017, Guo et al. \cite{GuWX} determined all connected $5$-regular oriented graphs with maximum skew-energy. In 2018, Deng et al. \cite{DeLS} found all tournaments achieving the minimum skew energy. In 2019, Wang and Fang \cite{WaFa} established an upper bound for skew energy of an oriented graph. More results on the skew energy of an oriented graph were refered to the survey "Skew energy of oriented graphs" by Li and Lian on page 191--236 of \cite{GuLi}.

In 2008, Pe{\~{n}}a and Rada \cite{PeRa} extended graph energy to digraph energy which was defined as the sum of the absolute values of the real parts of its eigenvalues of the adjacency matrix of a digraph. Moreover, Bhat \cite{Bh} studied the spectra of weighted digraphs and obtained some upper bound for the energy of weighted digraphs. Xi and Wang \cite{XiWa} considered the signless Laplacian energy of digraphs and derived some lower and upper bounds for the signless Laplacian energy of digraphs. The references \cite{KhFa1,MoRa,YaWa} were also described some results concerning digraph energy. For more results, please saw the survey "Energy of digraphs" by Rada on page 237--276 of \cite{GuLi}.

In 2014, the notion of energy of a sigraph was extended to sidigraph by Pirzada and Bhat \cite{PiBh}. The authors also studied the energy of sigraphs in \cite{BhPi} and they characterized the unicyclic sigraphs with minimal energy. Another results about the energy of sigraphs can be found in \cite{BhSP,WaHo}. They found the maximal energy among all $n$-vertices connected bicyclic sigraphs with at most one odd cycle is $\mathcal{P}_n^{4,4}$. For the energy of sidigraph, the minimal and maximal energy of bicyclic sidigraphs were found by Khan and Farooq \cite{KhFa2}. The maximal energy of all bicyclic sidigraphs was found and few classes of non-cospectral equienergetic bicyclic sidigraphs were constructed by Hafeez et al. \cite{HaFK}. The ordering of bicyclic sidigraphs in $\mathcal{S}_n$ by energy with two positive or negative directed even cycles (resp., one positive directed even cycle and one negative directed even cycle) was characterized by Yang and Wang \cite{YaWa2}.

In addition, the concept of iota energy of a digraph was introduced by Khan et al. \cite{KhFR}. Thus the iota energy of a digraph was defined as the sum of the absolute values of the imaginary parts of its eigenvalues of the adjacency matrix of a digraph. In 2019, Farooq et al. \cite{FaKC} extended the notion of iota energy to a sidigraph. They found minimal and maximal values of the iota energy over the set of unicyclic sidigraphs and discussed the increasing property of iota energy over specific subclasses of the set $S_{n,h}$. Farooq et al. \cite{FaCK} found sidigraphs with extremal iota energy among vertex-disjoint bicyclic sidigraphs of fixed order. Another more results of iota energy are studied in \cite{FaKA,HaKh,YaWa}. In this paper, we characterize two orderings of bicyclic sidigraphs in $\mathcal{S}_n$ with iota energy and determine extremal iota energy in $\mathcal{S}_n$ by the two orderings.

The organizational structure of this paper is as follows: In Section 2, we introduce some basic concepts about sidigraphs and the energy of sidigraphs. We also give some results about linear combinations of trigonometric functions can be used in our main results. In Section 3, we have the two iota energy orderings of bicyclic sidigraphs, one is the whole ordering including two positive or two negative directed even cycles, and the other is the ordering including one positive and one negative directed even cycles (except $C_2^+$). We also find the extremal iota energy of bicyclic sidigraphs in $\mathcal{S}_n$ by their two orderings.

\section{Preliminaries}

A digraph $D=(\mathcal{V}, \mathcal{A})$ is defined to be a vertex set $\mathcal{V}$ and an arc set $\mathcal{A}$. A sidigraph $S=(D, \sigma)$ is defined to be an underlying digraph $D=(\mathcal{V}, \mathcal{A})$ and a signed function $\sigma: \mathcal{A} \rightarrow\{1, -1\}$. An arc from a vertex $u$ to a vertex $v$ is denoted to be $uv$. A positive (negative) arc of $S$ is defined to be an arc with $+1$ $ (-1)$ sign and the product of signs of its arcs is called to be the sign of a sidigraph. A sidigraph is called to be positive (negative) if the sign of the sidigraph is positive (negative).

A directed path $P_n$ is defined to be a vertex set $\{v_i|i=1, 2, \ldots, n\}$ and a signed arc set $\{v_iv_{i+1}|i=1, 2, \ldots, n-1\}$. A directed cycle $C_n$ is defined to be a vertex set $\{v_i|i=1, 2, \ldots, n\}$ and a signed arc set $\{v_iv_{i+1}|i=1, 2, \ldots, n-1\}\cup\{v_nv_1\}$. A sidigraph is called to be cycle-balanced (non cycle-balanced) if each directed cycles of the sidigraph is positive (negative). And the positive (negative) directed cycle is denoted to be $C_n^+$ ($C_n^-$), where $n$ is the number of vertices of the directed cycle. A sidigraph is connected if its underlying graph is connected. A unicyclic sidigraph is defined to be a connected sidigraph containing a unique singed directed cycle and a bicyclic sidigraph is defined to be a connected sidigraph containing exactly two singed directed cycles.

A adjacency matrix $A(S)=(a_{ij})_{n \times n}$ of a sidigraph $S$ with $\mathcal{V}=\{v_1, v_2, \ldots, v_n$\} is given by
$$a_{ij}=
\begin{cases}
\sigma(v_i, v_j),\ \ \mbox{if} \ v_iv_j\in\mathcal{A},\\
0,\ \ \ \ \ \ \ \ \ \ \ \mbox{otherwise}.
\end{cases}$$
Let $\phi_{S}(x)=det(xI_n-A(S))$ be the characteristic polynomial of $S$ and $z_1, z_2, \ldots, z_n$ be the eigenvalues of $A(S)$.

In \cite{PiBh}, the energy of sidigraph was defined as the sum of the absolute values of the real parts of its eigenvalues of the adjacency matrix of a sidigraph, then
$$E(S)=\sum_{k=1}^n|\textnormal{Re}(z_k)|,$$
where $\textnormal{Re}(z_k)$ is the real part of eigenvalue $z_k$. And the energy for a positive (negative) directed cycle $C_n^+$ ($C_n^-$) is
$$E(C_n^+)=\sum_{k=0}^{n-1}|\cos{\frac{2k\pi}{n}}|,\ E(C_n^-)=\sum_{k=0}^{n-1}|\cos{\frac{(2k+1)\pi}{n}}|.$$
Furthermore, some results of energy of sidigraphs can be found in \cite{PeRa}:
$$E(C_r^+)\leq E(C_s^+),\ E(C_r^-)\leq E(C_s^-).$$
where $C_r^+$ ($C_r^-$) and $C_s^+$ ($C_s^-$) are two positive (negative) directed cycles and $2\leq r\leq s$.

In \cite{PiBh}, we also have the formulae to calculate energy of a positive (negative) directed cycle $C_n^+$ ($C_n^-$), where $n\geq 2$.

\begin{equation}\label{eq:ch-1}
E(C_n^+)=
\begin{cases}
2\cot{\frac{\pi}{n}},\ \ \textnormal{if}\ n\equiv0\ (\textnormal{mod}\ 4),\\
2\csc{\frac{\pi}{n}},\ \ \textnormal{if}\ n\equiv2\ (\textnormal{mod}\ 4),\\
\csc{\frac{\pi}{2n}},\ \ \ \textnormal{if}\ n\equiv1\ (\textnormal{mod}\ 2).
\end{cases}
\end{equation}

\begin{equation}\label{eq:ch-2}
E(C_n^-)=
\begin{cases}
2\csc{\frac{\pi}{n}},\ \ \textnormal{if}\ n\equiv0\ (\textnormal{mod}\ 4),\\
2\cot{\frac{\pi}{n}},\ \ \textnormal{if}\ n\equiv2\ (\textnormal{mod}\ 4),\\
\csc{\frac{\pi}{2n}},\ \ \ \textnormal{if}\ n\equiv1\ (\textnormal{mod}\ 2).
\end{cases}
\end{equation}

Because the adjacency matrix $A(S)$ of a sigraph is not a symmetric matrix, so the eigenvalues of $S$ may be have the real part and the imaginary part. In 2019,  Farooq et al. \cite{FaKC} considered the iota energy of sidigraph which defined as the sum of the absolute values of the imaginary parts of its eigenvalues of the adjacency matrix of a sidigraph, then
$$E_c(S)=\sum_{k=1}^n|\textnormal{Im}(z_k)|,$$
where $\textnormal{Im}(z_k)$ is the imaginary part of eigenvalue $z_k$.
Similarly to energy, we have
$$E_c(C_n^+)=\sum_{k=0}^{n-1}|\sin{\frac{2k\pi}{n}}|,\ E_c(C_n^-)=\sum_{k=0}^{n-1}|\sin{\frac{(2k+1)\pi}{n}}|.$$
And in \cite{KhFR}, we have
$$E_c(C_r^+)\leq E_c(C_s^+),\ E_c(C_r^-)\leq E_c(C_s^-).$$
We also have the formulae in \cite{FaKC} to calculate iota energy of a positive (negative) directed cycle $C_n^+$ ($C_n^-$), where $n\geq 2$.

\begin{equation}\label{eq:ch-3}
E_c(C_n^+)=
\begin{cases}
2\cot{\frac{\pi}{n}},\ \ \textnormal{if}\ n\equiv0\ (\textnormal{mod}\ 2),\\
\cot{\frac{\pi}{2n}},\ \ \ \textnormal{if}\ n\equiv1\ (\textnormal{mod}\ 2).
\end{cases}
\end{equation}

\begin{equation}\label{eq:ch-4}
E_c(C_n^-)=
\begin{cases}
2\csc{\frac{\pi}{n}},\ \ \textnormal{if}\ n\equiv0\ (\textnormal{mod}\ 2),\\
\cot{\frac{\pi}{2n}},\ \ \ \textnormal{if}\ n\equiv1\ (\textnormal{mod}\ 2).
\end{cases}
\end{equation}

A sidigraph $S=(D, \sigma)$ is strongly connected if it has a directed path from $u$ to $v$ and a directed path from $v$ to $u$ ($\forall u,v\in \mathcal{V}$). A strong component of a sidigraph $S$ is the maximal strongly connected subsidigraph of $S$. And the iota energy of a sidigraph $S$ is equal to the sum of the iota energy of every strong components of a sidigraph $S$. Thoughout this paper, we consider a special graph class for bicyclic signed digraphs $\mathcal{S}_n$ with $n$ vertices ($n\geq4$) which have two vertex-disjoint signed directed even cycles. Let two vertex-disjoint signed directed even cycles be $C_{r_1}$ and $C_{r_2}$, where $r_1, r_2$ are all even numbers and $2\leq r_1, r_2\leq n-2$. Moreover, $C_{r_1}$ and $C_{r_2}$ are the strong components of $S\in \mathcal{S}_n$, obviously,
$$E_c(S)=E_c(C_{r_1})+E_c(C_{r_2}).$$

We also need some lemmas about linear combinations of trigonometric functions to prove our main results.

\noindent\begin{lemma}\label{le:ch-2.1}\cite{FaKA} The function $f(x)=2\cot{\frac{\pi}{x}}+2\cot{\frac{\pi}{n-x}}$ is increasing on $[2,\frac{n}{2}]$ and decreasing on $[\frac{n}{2},n-2]$, where $n>4$. \end{lemma}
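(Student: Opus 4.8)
The plan is to prove the monotonicity of $f(x)=2\cot\frac{\pi}{x}+2\cot\frac{\pi}{n-x}$ on $[2,\frac{n}{2}]$ and $[\frac{n}{2},n-2]$ by a direct derivative computation together with a symmetry observation. First I would note that $f$ is symmetric about $x=\tfrac{n}{2}$: replacing $x$ by $n-x$ interchanges the two summands and leaves $f$ unchanged. Hence it suffices to establish that $f$ is increasing on $[2,\frac{n}{2}]$, and the decreasing behaviour on $[\frac{n}{2},n-2]$ follows immediately from the reflection $x\mapsto n-x$.

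\smallskip

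To show $f$ is increasing on $[2,\frac{n}{2}]$, I would compute $f'(x)$ using the chain rule. Writing $g(x)=2\cot\frac{\pi}{x}$, we have $\frac{d}{dx}\cot\frac{\pi}{x}=-\csc^2\!\big(\frac{\pi}{x}\big)\cdot\big(-\frac{\pi}{x^2}\big)=\frac{\pi}{x^2}\csc^2\!\frac{\pi}{x}$, so that
\begin{equation*}
f'(x)=\frac{2\pi}{x^2}\csc^2\!\frac{\pi}{x}-\frac{2\pi}{(n-x)^2}\csc^2\!\frac{\pi}{n-x}.
\end{equation*}
Thus proving $f'(x)>0$ on $(2,\frac{n}{2})$ reduces to showing
\begin{equation*}
\frac{1}{x^2}\csc^2\!\frac{\pi}{x}>\frac{1}{(n-x)^2}\csc^2\!\frac{\pi}{n-x},
\end{equation*}
i.e.\ that the auxiliary function $h(t)=\frac{1}{t}\csc\frac{\pi}{t}$ (equivalently $t\mapsto\frac{1}{t^2}\csc^2\frac{\pi}{t}$) is strictly decreasing in $t$ for $t\ge 2$; since on $(2,\frac{n}{2})$ we have $x<n-x$, decreasingness of $h$ would give the desired inequality.

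\smallskip

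The key step, and the main obstacle, is therefore establishing that $\frac{1}{t}\csc\frac{\pi}{t}$ is decreasing for $t\ge 2$. Setting $u=\frac{\pi}{t}\in(0,\frac{\pi}{2}]$, this is equivalent to the assertion that $\frac{u}{\sin u}$ is increasing on $(0,\frac{\pi}{2}]$, which is a standard and clean fact: its derivative has numerator $\sin u-u\cos u$, and the auxiliary function $q(u)=\sin u-u\cos u$ satisfies $q(0)=0$ and $q'(u)=u\sin u>0$ on $(0,\frac{\pi}{2})$, so $q(u)>0$ there. Hence $\frac{u}{\sin u}$ is increasing in $u$, so it is decreasing in $t$, and tracing the substitution back yields $f'(x)>0$ on $(2,\frac{n}{2})$. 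By continuity $f$ is increasing on the closed interval $[2,\frac{n}{2}]$, and the symmetry argument completes the proof on $[\frac{n}{2},n-2]$. I expect the only delicacy to be the careful bookkeeping of the argument ranges (ensuring $\frac{\pi}{x},\frac{\pi}{n-x}\in(0,\frac{\pi}{2}]$ so that the monotonicity of $\frac{u}{\sin u}$ applies), which the hypothesis $n>4$ and $2\le x\le n-2$ guarantees.
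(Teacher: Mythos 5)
Your proof is correct. Note that the paper itself does not prove this lemma at all --- it is quoted with a citation to the reference [FaKA] --- so there is no in-paper argument to compare against line by line. Your route (symmetry of $f$ about $x=\tfrac{n}{2}$, reducing $f'(x)>0$ to the monotonicity of $t\mapsto \frac{1}{t^2}\csc^2\frac{\pi}{t}$, and proving that via $\sin u - u\cos u>0$ on $(0,\tfrac{\pi}{2})$) is sound; the derivative computation, the sign analysis, and the range bookkeeping $\frac{\pi}{x},\frac{\pi}{n-x}\in(0,\tfrac{\pi}{2}]$ all check out, and the passage from $h(x)>h(n-x)$ to $h(x)^2>h(n-x)^2$ is justified because $h>0$ there. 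It is worth observing that your key auxiliary fact is, up to the constant factor $\pi$, exactly the paper's own Lemma 2.4 (that $\frac{\pi}{x^2}\csc^2\frac{\pi}{x}$ is decreasing on $[2,n-2]$), which the paper proves by the same $\sin t - t\cos t\geq 0$ device but deploys only later, in the proof of Lemma 3.2(iii). So your argument in effect shows that Lemma 2.1 need not be imported from the literature: it follows from Lemma 2.4 plus the reflection $x\mapsto n-x$, which is a small but genuine tightening of the paper's logical organization.
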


\noindent\begin{lemma}\label{le:ch-2.2}\cite{YaWa} The function $f(x)=2\csc{\frac{\pi}{x}}+2\csc{\frac{\pi}{n-x}}$ is decreasing on $[2,\frac{n}{2}]$, where $n>4$. \end{lemma}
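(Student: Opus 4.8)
The plan is to treat $x$ as a continuous real variable on $[2,\frac{n}{2}]$ and to show that $f'(x)<0$ in the interior, so that $f$ is decreasing on the closed interval. Writing $g(t)=\csc\frac{\pi}{t}$ so that $f(x)=2g(x)+2g(n-x)$, a direct differentiation gives $g'(t)=\frac{\pi}{t^{2}}\csc\frac{\pi}{t}\cot\frac{\pi}{t}=:h(t)$, whence $f'(x)=2\bigl[h(x)-h(n-x)\bigr]$. Since $x\le n-x$ for every $x\in[2,\frac{n}{2}]$, and both $x$ and $n-x$ lie in $[2,n-2]$, it suffices to prove that $h$ is strictly increasing on $[2,n-2]$: this yields $h(x)\le h(n-x)$ with equality only at $x=\frac{n}{2}$, hence $f'(x)\le 0$, as required.

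To study $h$ I would substitute $u=\frac{\pi}{t}\in(0,\frac{\pi}{2}]$ for $t\in[2,\infty)$, which rewrites $h(t)=\frac{1}{\pi}\,\psi(u)$ with $\psi(u)=\dfrac{u^{2}\cos u}{\sin^{2}u}$. Because $u$ is a strictly decreasing function of $t$, the assertion that $h$ increases in $t$ is equivalent to the assertion that $\psi$ decreases in $u$ on $(0,\frac{\pi}{2}]$. Taking the logarithmic derivative gives
\[
\frac{\psi'(u)}{\psi(u)}=\frac{2}{u}-\tan u-2\cot u ,
\]
and since $\psi>0$ on $(0,\frac{\pi}{2})$, it remains to establish the trigonometric inequality $\tan u+2\cot u>\frac{2}{u}$ there.

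Clearing the positive factor $u\sin u\cos u$ reduces this to the equivalent form $u(1+\cos^{2}u)>\sin 2u$, i.e.\ to $G(u):=\frac{3u}{2}+\frac{u\cos 2u}{2}-\sin 2u>0$. One checks $G(0)=0$, and the key simplification is that the derivative factors cleanly:
\[
G'(u)=\tfrac{3}{2}(1-\cos 2u)-u\sin 2u=\sin u\,(3\sin u-2u\cos u).
\]
Setting $\chi(u):=3\sin u-2u\cos u$, one has $\chi(0)=0$ and $\chi'(u)=\cos u+2u\sin u>0$ on $(0,\frac{\pi}{2})$, so $\chi>0$; therefore $G'>0$, and with $G(0)=0$ this gives $G>0$ on $(0,\frac{\pi}{2})$. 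This closes the chain: $\psi$ is strictly decreasing, hence $h$ is strictly increasing on $[2,n-2]$, hence $f'(x)<0$ on $[2,\frac{n}{2})$ and $f$ is decreasing on $[2,\frac{n}{2}]$.

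The derivative computations and the substitution are routine; the only real obstacle is arranging the reduction so that the final inequality is provable by a transparent monotonicity argument rather than brute force. The substitution $u=\pi/t$ together with the two-stage differentiation (first $G$, then the auxiliary $\chi$) is exactly what makes every sign manifest. I would close by remarking that, although in the application $x$ ranges only over even integers (the lengths of the two directed cycles), the continuous monotonicity on $[2,\frac{n}{2}]$ immediately delivers the desired decreasing behaviour on the relevant discrete set.
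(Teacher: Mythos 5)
Your argument is correct, and it is worth noting that the paper itself offers no proof of this lemma to compare against: Lemma \ref{le:ch-2.2} is quoted from the reference \cite{YaWa} without proof, so your derivation is a genuine addition rather than a variant of an argument in the text. Every step checks out: $g'(t)=\frac{\pi}{t^{2}}\csc\frac{\pi}{t}\cot\frac{\pi}{t}$ is the correct derivative, the reduction of $f'(x)\le 0$ to the monotonicity of $h$ on $[2,n-2]$ is sound because $x\le n-x$ on $[2,\frac{n}{2}]$, the substitution $u=\pi/t$ correctly converts the claim into the decrease of $\psi(u)=\frac{u^{2}\cos u}{\sin^{2}u}$ on $(0,\frac{\pi}{2}]$, and the cleared inequality $u(1+\cos^{2}u)>\sin 2u$ together with the factorization $G'(u)=\sin u\,(3\sin u-2u\cos u)$ and the auxiliary function $\chi$ closes the argument cleanly. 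Your method is in the same spirit as the one the authors do write out for Lemma \ref{le:ch-2.4} (substitute $u=\pi/t$, factor the derivative, and reduce to an auxiliary function that vanishes at $0$ and has positive derivative, exactly as with your $\chi(u)=3\sin u-2u\cos u$ versus their $h(t)=\sin t-t\cos t$), so it fits the paper's toolkit while being fully self-contained. The only cosmetic point is that the logarithmic derivative is not defined at $u=\frac{\pi}{2}$ where $\psi$ vanishes; since $\psi>0$ and strictly decreasing on the open interval and $\psi(\frac{\pi}{2})=0$, continuity extends the monotonicity to the closed endpoint, which you may wish to state explicitly.
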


\noindent\begin{lemma}\label{le:ch-2.3}\cite{YaWa} The function $f(x)=2\csc{\frac{\pi}{x}}+2\cot{\frac{\pi}{n-x}}$ is decreasing on $[2,n-2]$, where $n>4$. \end{lemma}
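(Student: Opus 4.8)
The plan is to prove that $f'(x)<0$ at every interior point $x\in(2,n-2)$, which forces $f$ to be strictly decreasing on the closed interval. Writing $a=\pi/x$ and $b=\pi/(n-x)$, a direct differentiation gives
$$f'(x)=\frac{2\pi}{x^{2}}\csc\frac{\pi}{x}\cot\frac{\pi}{x}-\frac{2\pi}{(n-x)^{2}}\csc^{2}\frac{\pi}{n-x}=\frac{2}{\pi}\big(\phi(a)-\psi(b)\big),$$
where I abbreviate $\phi(t)=t^{2}\csc t\cot t$ and $\psi(t)=t^{2}\csc^{2}t$. For $x\in[2,n-2]$ we have $x\ge 2$ and $n-x\ge 2$, so both $a$ and $b$ lie in $(0,\pi/2]$. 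Thus the whole lemma reduces to the single inequality $\phi(a)<\psi(b)$ for all admissible $x$.

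The key analytic facts I would establish about these two auxiliary functions on $(0,\pi/2]$ are: (i) $\psi$ is increasing; (ii) $\phi$ is decreasing; and (iii) $\phi(t)<\psi(t)$ pointwise. Fact (iii) is immediate, since $\psi(t)-\phi(t)=t^{2}\csc t(\csc t-\cot t)=t^{2}(1-\cos t)/\sin^{2}t>0$. For (i), one computes $\psi'(t)=2t(\sin t-t\cos t)/\sin^{3}t$, which is positive because $\tan t>t$ on $(0,\pi/2]$. Fact (ii) is the most delicate: differentiating gives $\phi'(t)=t\big(\sin 2t-t(1+\cos^{2}t)\big)/\sin^{3}t$, so I must verify that $G(t):=t(1+\cos^{2}t)-\sin 2t>0$ on $(0,\pi/2]$. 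Here $G(0)=0$ and $G'(t)=\sin t\,(3\sin t-2t\cos t)$, which is positive because $3\tan t>3t>2t$; hence $G$ is increasing, stays positive, and $\phi'<0$.

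With these three facts in hand I would finish by splitting the interval at its midpoint. If $x\le n/2$ then $n-x\ge x$, so $b\le a$; since $\phi$ is decreasing and $\phi<\psi$ pointwise, $\phi(a)\le\phi(b)<\psi(b)$. If instead $x\ge n/2$ then $b\ge a$; since $\psi$ is increasing and $\phi<\psi$ pointwise, $\phi(a)<\psi(a)\le\psi(b)$. In either case $\phi(a)<\psi(b)$, so $f'(x)<0$ on $(2,n-2)$ and $f$ is strictly decreasing on $[2,n-2]$. I expect the monotonicity of $\phi$ in fact (ii) to be the main obstacle, because it is the only step that does not reduce directly to the elementary estimate $\tan t>t$ and instead requires the auxiliary monotone function $G$; once the three facts are secured, the case analysis is purely formal.
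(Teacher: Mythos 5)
Your argument is correct, and in fact the paper offers nothing to compare it against: Lemma~\ref{le:ch-2.3} is quoted from the external reference \cite{YaWa} with no proof reproduced here, so your write-up supplies a self-contained justification that the paper omits. I checked the key computations: the derivative identity $f'(x)=\frac{2}{\pi}\bigl(\phi(a)-\psi(b)\bigr)$ with $\phi(t)=t^{2}\csc t\cot t$, $\psi(t)=t^{2}\csc^{2}t$ is right; $\psi'(t)=2t(\sin t-t\cos t)/\sin^{3}t>0$ and $\phi'(t)=t\bigl(\sin 2t-t(1+\cos^{2}t)\bigr)/\sin^{3}t$ are right; and $G'(t)=\sin t\,(3\sin t-2t\cos t)>0$ follows from $\tan t>t$, so all three facts hold on $(0,\pi/2]$, and the two-case comparison then gives $\phi(a)<\psi(b)$ for every admissible pair. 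Your approach is in the same spirit as the paper's own proof of Lemma~\ref{le:ch-2.4}, which also reduces to the elementary estimate $\sin t-t\cos t\ge 0$ for $t\in(0,\pi/2]$. One small simplification worth noting: since $\phi$ decreases from its limit $1$ at $0^{+}$ and $\psi$ increases from its limit $1$ at $0^{+}$, your facts (i) and (ii) already yield $\phi(a)<1<\psi(b)$ for all $a,b\in(0,\pi/2]$, so fact (iii) and the case split on $x\lessgtr n/2$ are not actually needed.
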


\noindent\begin{lemma}\label{le:ch-2.4}  The function $f(x)=\frac{\pi}{x^2}{\csc}^2{\frac{\pi}{x}}$ is decreasing on $[2,n-2]$, where $n>4$. \end{lemma}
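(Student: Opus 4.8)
The plan is to reduce the statement to an elementary monotonicity fact about $u/\sin u$. First I would pass to the variable $u=\pi/x$; since $n>4$ the map $x\mapsto u$ sends $[2,n-2]$ bijectively and strictly decreasingly onto $\bigl[\tfrac{\pi}{n-2},\tfrac{\pi}{2}\bigr]\subset\bigl(0,\tfrac{\pi}{2}\bigr]$. Under this substitution,
\begin{equation*}
f(x)=\frac{\pi}{x^2}\csc^2\frac{\pi}{x}=\frac{1}{\pi}\Bigl(\frac{u}{\sin u}\Bigr)^2 .
\end{equation*}
Because $x\mapsto u$ is strictly decreasing, $f$ is decreasing in $x$ on $[2,n-2]$ precisely when $u\mapsto(u/\sin u)^2$ is increasing on $\bigl(0,\tfrac{\pi}{2}\bigr]$, and since $u/\sin u>0$ there, this holds iff $u/\sin u$ itself is increasing, equivalently $\sin u/u$ is decreasing.

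Alternatively, and this is the route I would actually write out, one differentiates $f$ directly. A short computation using $\tfrac{d}{dx}\csc\tfrac{\pi}{x}=\tfrac{\pi}{x^2}\csc\tfrac{\pi}{x}\cot\tfrac{\pi}{x}$ gives
\begin{equation*}
f'(x)=\frac{2\pi}{x^3}\,\csc^2\frac{\pi}{x}\,\Bigl(\frac{\pi}{x}\cot\frac{\pi}{x}-1\Bigr).
\end{equation*}
The prefactor $\frac{2\pi}{x^3}\csc^2\frac{\pi}{x}$ is strictly positive on $[2,n-2]$, so the sign of $f'$ is governed entirely by the bracket, and the whole problem collapses to showing $\frac{\pi}{x}\cot\frac{\pi}{x}<1$ there. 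Writing once more $u=\pi/x\in\bigl(0,\tfrac{\pi}{2}\bigr]$ and using $\sin u>0$, this bracket condition is equivalent to $\sin u-u\cos u>0$.

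The only real content, the ``hard part'' such as it is, is this scalar inequality $\sin u-u\cos u>0$, which I would establish on all of $(0,\pi)$ for safety. It is routine: the left-hand side vanishes at $u=0$ and has derivative $u\sin u$, which is positive on $(0,\pi)$, so the expression is strictly positive throughout $(0,\pi)$, and in particular on the relevant range $\bigl(0,\tfrac{\pi}{2}\bigr]$. Consequently the bracket in $f'(x)$ is negative, so $f'(x)<0$ on $[2,n-2]$ and $f$ is strictly decreasing there, as claimed. I expect no genuine obstacle; the one point to state carefully is that $n>4$ keeps $u=\pi/x$ comfortably inside $(0,\tfrac{\pi}{2}]$, away from the zeros of $\sin u$, so that the divisions and the sign analysis are valid on the entire interval. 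The same bookkeeping could be phrased in the style of Lemmas~\ref{le:ch-2.1}--\ref{le:ch-2.3} via the substitution $u=\pi/x$, but here a single derivative suffices.
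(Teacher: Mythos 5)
Your proof is correct and follows essentially the same route as the paper's: after the substitution $u=\pi/x$ (the paper writes $t=\pi/x^{2}$, evidently a typo for $\pi/x$), both arguments reduce the claim to the inequality $\sin u - u\cos u>0$ on $\bigl(0,\tfrac{\pi}{2}\bigr]$ and establish it by observing that this expression vanishes at $0$ and has derivative $u\sin u>0$. If anything, your handling of the base point (positivity from $h(0)=0$ and strict monotonicity) is cleaner than the paper's appeal to $h(\pi/(n-2))\to 0$ as $n\to\infty$.
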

\begin{proof}
Let $g(t)=t^2{\csc}^2{t}$, where $t=\frac{\pi}{x^2}$, then $t \in [\frac{\pi}{n-2},\frac{\pi}{2}]$.
We have $g'(t)=2t{\csc}^2{t}-2t^2\cos{t}{\csc}^3{t}=2t{\csc}^3{t}(\sin{t}-t\cos{t})$. In order to prove $g(t)$ is increasing on $[\frac{\pi}{n-2},\frac{\pi}{2}]$, we should prove $\sin{t}-t\cos{t} \geq 0$.

Let $h(t)=\sin{t}-t\cos{t}$. In order to prove $h(t) \geq0$, we should prove $h(t)_\textnormal{min}:=\min\{h(t)|t \in [\frac{\pi}{n-2},\frac{\pi}{2}]\} \geq 0$. We have $h'(t)=\cos{t}-\cos{t}+t\sin{t}=t\sin{t}>0$. So $h(t)$ is increasing on $[\frac{\pi}{n-2},\frac{\pi}{2}]$. It implies $h(t) \geq h(t)_\textnormal{min}=h(\frac{\pi}{n-2})\rightarrow0$ $(n \rightarrow \infty)$. Hence $f(x)$ is decreasing on $[2,n-2]$.
\end{proof}

\section{Iota energy orderings of bicyclic signed digraphs}

In this section, we characterize two iota energy orderings of bicyclic sidigraphs, one is including two positive or two negative directed even cycles, the other is including one positive and one negative directed even cycles. In order to find the two orderings, we need consider two cases: $n$ is an even number or $n$ is an odd number. Firstly, we consider the iota energy of bicyclic sidigraphs with two positive or two negative directed even cycles.

\noindent\begin{lemma}\label{le:ch-3.1}
Let $C_m^+$ and $C_{n-m}^+$ ($C_m^-$ and $C_{n-m}^-$) be vertex-disjoint positive (negative) directed even cycles, where $n$ and $m$ are all even numbers, $n>4$ and $2\leq m \leq n-2$. Then we have

(i) If $\frac{n}{2}$ is an even number, then
\begin{equation}
\begin{split}\label{eq:ch-5}
&E_c(C_2^-)+E_c(C_{n-2}^-)>E_c(C_4^-)+E_c(C_{n-4}^-)>\cdots>E_c(C_{\frac{n}{2}}^-)+E_c(C_{\frac{n}{2}}^-)\\
&>E_c(C_{\frac{n}{2}}^+)+E_c(C_{\frac{n}{2}}^+)>\cdots>E_c(C_4^+)+E_c(C_{n-4}^+)>E_c(C_2^+)+E_c(C_{n-2}^+).
\end{split}
\end{equation}

(ii) If $\frac{n}{2}$ is an odd number, then
\begin{equation}
\begin{split}\label{eq:ch-6}
&E_c(C_2^-)+E_c(C_{n-2}^-)>E_c(C_4^-)+E_c(C_{n-4}^-)>\cdots>E_c(C_{\frac{n}{2}-1}^-)+E_c(C_{\frac{n}{2}+1}^-)\\
&>E_c(C_{\frac{n}{2}-1}^+)+E_c(C_{\frac{n}{2}+1}^+)>\cdots>E_c(C_4^+)+E_c(C_{n-4}^+)>E_c(C_2^+)+E_c(C_{n-2}^+).
\end{split}
\end{equation}
\end{lemma}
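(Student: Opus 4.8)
The plan is to reduce both chains of inequalities to the monotonicity of the two single-variable functions supplied by Lemmas~\ref{le:ch-2.1} and~\ref{le:ch-2.2}, and then to glue the negative half of each ordering to the positive half through one elementary comparison. First I would invoke the iota-energy formulas~\eqref{eq:ch-3} and~\eqref{eq:ch-4}: since $n$ and $m$ are even, $E_c(C_m^+)=2\cot\frac{\pi}{m}$ and $E_c(C_m^-)=2\csc\frac{\pi}{m}$, so that
$$E_c(C_m^+)+E_c(C_{n-m}^+)=2\cot\tfrac{\pi}{m}+2\cot\tfrac{\pi}{n-m}=:f(m)$$
and
$$E_c(C_m^-)+E_c(C_{n-m}^-)=2\csc\tfrac{\pi}{m}+2\csc\tfrac{\pi}{n-m}=:g(m).$$
Both $f$ and $g$ are symmetric under $m\mapsto n-m$, which is exactly why the displayed orderings are symmetric about the central term and why it suffices to control $m$ on $[2,\frac{n}{2}]$.

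Next I would read off the two halves directly from the lemmas. By Lemma~\ref{le:ch-2.2}, $g$ is decreasing on $[2,\frac{n}{2}]$, so restricting to the even integers $m=2,4,\dots$ yields $g(2)>g(4)>\cdots$, which is precisely the strictly decreasing chain of negative-cycle sums forming the left portion of~\eqref{eq:ch-5} and~\eqref{eq:ch-6}. By Lemma~\ref{le:ch-2.1}, $f$ is increasing on $[2,\frac{n}{2}]$, so $f(2)<f(4)<\cdots$; read from the centre outwards this is the strictly decreasing chain of positive-cycle sums forming the right portion of each ordering. The only place the parity of $\frac{n}{2}$ enters is the choice of the central admissible (even) value of $m$: when $\frac{n}{2}$ is even the centre is $m=\frac{n}{2}$, giving the terms $E_c(C_{n/2}^{\pm})+E_c(C_{n/2}^{\pm})$ of~\eqref{eq:ch-5}; when $\frac{n}{2}$ is odd the largest even $m\le\frac{n}{2}$ is $\frac{n}{2}-1$, giving the terms $E_c(C_{n/2-1}^{\pm})+E_c(C_{n/2+1}^{\pm})$ of~\eqref{eq:ch-6}.

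Finally I would splice the two halves at the centre, which is the one inequality not covered by the monotonicity lemmas. In case~(i) this amounts to $4\csc\frac{2\pi}{n}>4\cot\frac{2\pi}{n}$, and in case~(ii) to $2\csc\frac{\pi}{n/2-1}+2\csc\frac{\pi}{n/2+1}>2\cot\frac{\pi}{n/2-1}+2\cot\frac{\pi}{n/2+1}$; both follow term by term from the identity $\csc\theta-\cot\theta=\frac{1-\cos\theta}{\sin\theta}>0$ for $\theta\in(0,\pi)$, which applies since each argument $\frac{\pi}{m}$ with $m\ge 2$ lies in $(0,\frac{\pi}{2}]\subset(0,\pi)$. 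Concatenating the decreasing negative chain, this central strict inequality, and the decreasing positive chain then yields the full orderings~\eqref{eq:ch-5} and~\eqref{eq:ch-6}.

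The argument is mostly bookkeeping once the two monotonicity lemmas are in hand; the only genuine content beyond them is the central gluing step, so that is where I would be most careful---in particular checking that the central term is evaluated at a legitimate even cycle length in each parity case, and that the $\csc$--$\cot$ comparison is applied at arguments strictly inside $(0,\pi)$ so that the gluing inequality is strict.
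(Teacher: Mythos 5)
Your proposal is correct and follows essentially the same route as the paper: reduce to $2\cot\frac{\pi}{m}+2\cot\frac{\pi}{n-m}$ and $2\csc\frac{\pi}{m}+2\csc\frac{\pi}{n-m}$ via \eqref{eq:ch-3}--\eqref{eq:ch-4}, apply Lemmas~\ref{le:ch-2.1} and~\ref{le:ch-2.2} on $[2,\frac{n}{2}]$ for the two halves, and glue them with the central $\csc>\cot$ comparison. Your treatment of the gluing step in case (ii) via $\csc\theta-\cot\theta>0$ is in fact slightly more explicit than the paper, which only asserts that the proof of \eqref{eq:ch-6} is similar.
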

\begin{proof}
If $2\leq m \leq \frac{n}{2}$, then $\frac{n}{2} \leq n-m \leq n-2$, and $m, n-m$ are all even numbers. And vice versa, so we only consider the former.

Firstly, by Eq.\eqref{eq:ch-3} and Eq.\eqref{eq:ch-4}, we have
$$E_c(C_m^+)+E_c(C_{n-m}^+)=2\cot{\frac{\pi}{m}}+2\cot{\frac{\pi}{n-m}},$$
$$E_c(C_m^-)+E_c(C_{n-m}^-)=2\csc{\frac{\pi}{m}}+2\csc{\frac{\pi}{n-m}}.$$

Next, by Lemma \ref{le:ch-2.2}, we see that $2\csc{\frac{\pi}{m}}+2\csc{\frac{\pi}{n-m}}$ is decreasing on the interval $[2,\frac{n}{2}]$. And by Lemma \ref{le:ch-2.1}, we see that $2\cot{\frac{\pi}{m}}+2\cot{\frac{\pi}{n-m}}$ is increasing on the interval $[2,\frac{n}{2}]$.
So if $\frac{n}{2}$ is an even number, we have
$$E_c(C_2^-)+E_c(C_{n-2}^-)>E_c(C_4^-)+E_c(C_{n-4}^-)>\cdots>E_c(C_{\frac{n}{2}}^-)+E_c(C_{\frac{n}{2}}^-),$$
$$E_c(C_{\frac{n}{2}}^+)+E_c(C_{\frac{n}{2}}^+)>\cdots>E_c(C_4^+)+E_c(C_{n-4}^+)>E_c(C_2^+)+E_c(C_{n-2}^+).$$

Obviously,
$$2\csc{\frac{2\pi}{n}}+2\csc{\frac{2\pi}{n}}>2\cot{\frac{2\pi}{n}}+2\cot{\frac{2\pi}{n}},$$
so the ordering of \eqref{eq:ch-5} holds, the proof of \eqref{eq:ch-6} is similar.
\end{proof}

\noindent\begin{lemma}\label{le:ch-3.2}
If the bicyclic signed digraphs $\mathcal{S}_n$ with $n$ vertices ($n\geq22$) which have two vertex-disjoint signed directed even cycles, then we have

(i) If $n$ is an even number and $\frac{n}{2}$ is an odd number, then
$$E_c(C_{\frac{n-2}{2}}^-)+E_c(C_{\frac{n-2}{2}}^-)>E_c(C_2^+)+E_c(C_{n-2}^+)>E_c(C_{\frac{n-2}{2}}^+)+E_c(C_{\frac{n-2}{2}}^+).$$

(ii) If $n$ is an even number and $\frac{n}{2}$ is an even number, then
$$E_c(C_{\frac{n-2}{2}-1}^-)+E_c(C_{\frac{n-2}{2}+1}^-)>E_c(C_2^+)+E_c(C_{n-2}^+)>E_c(C_{\frac{n-2}{2}-1}^+)+E_c(C_{\frac{n-2}{2}+1}^+).$$

(iii) If $n$ is an even number, then
$$E_c(C_6^+)+E_c(C_{n-6}^+)>E_c(C_2^-)+E_c(C_{n-4}^-)>E_c(C_4^+)+E_c(C_{n-4}^+).$$
\end{lemma}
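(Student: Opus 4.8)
The plan is to reduce every term to the closed forms of \eqref{eq:ch-3} and \eqref{eq:ch-4}, using the special values $E_c(C_2^+)=2\cot\frac{\pi}{2}=0$, $E_c(C_2^-)=2\csc\frac{\pi}{2}=2$, $E_c(C_4^+)=2\cot\frac{\pi}{4}=2$ and $E_c(C_6^+)=2\cot\frac{\pi}{6}=2\sqrt{3}$, so that each claimed inequality becomes a statement about $\cot$ and $\csc$ evaluated at angles of the form $\frac{\pi}{n-2},\frac{2\pi}{n-2},\frac{\pi}{n-4},\frac{\pi}{n-6}$. The engine for parts (i) and (ii) will be two elementary double-angle facts: writing $\theta=\frac{\pi}{n-2}$, one has $4\csc 2\theta>2\cot\theta$ (equivalent to $\cos^2\theta<1$) and $2\cot\theta>4\cot 2\theta$ (equivalent, via $4\cot 2\theta=2\cot\theta-2\tan\theta$, to $\tan\theta>0$). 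Both hold for all admissible $n$, so they carry no restriction on $n$.

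First I would dispatch part (i). Since $\frac n2$ is odd, $n-2\equiv 0\pmod 4$, so $\frac{n-2}{2}$ is even and the middle cycle $C_{\frac{n-2}{2}}$ is legitimate; the three quantities equal $4\csc\frac{2\pi}{n-2}$, $2\cot\frac{\pi}{n-2}$ and $4\cot\frac{2\pi}{n-2}$, and the displayed chain is precisely the two double-angle inequalities with $\theta=\frac{\pi}{n-2}$. For part (ii), where $\frac n2$ is even and hence $p:=\frac{n-4}{2}$ and $q:=\frac n2$ are even with $p+q=n-2$ and $p=q-2$, I would invoke Lemma \ref{le:ch-2.2} for the total $n-2$: since $2\csc\frac{\pi}{x}+2\csc\frac{\pi}{(n-2)-x}$ is decreasing on $[2,\frac{n-2}{2}]$ and $p<\frac{n-2}{2}$, the left quantity $2\csc\frac{\pi}{p}+2\csc\frac{\pi}{q}$ strictly exceeds its balanced value $4\csc\frac{2\pi}{n-2}$, which in turn exceeds $2\cot\frac{\pi}{n-2}$ by the first double-angle inequality. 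Symmetrically, Lemma \ref{le:ch-2.1} shows $2\cot\frac{\pi}{p}+2\cot\frac{\pi}{q}$ lies below its balanced value $4\cot\frac{2\pi}{n-2}$, which lies below $2\cot\frac{\pi}{n-2}$ by the second double-angle inequality; combining the two gives (ii).

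The crux is part (iii). Its second inequality cancels the common $2$ and reduces to $\csc\frac{\pi}{n-4}>\cot\frac{\pi}{n-4}$, i.e. $\cos\frac{\pi}{n-4}<1$, which is immediate. The first inequality, after cancelling $2$ and using $E_c(C_6^+)=2\sqrt3$, $E_c(C_2^-)=2$, becomes
$$F(n):=\csc\frac{\pi}{n-4}-\cot\frac{\pi}{n-6}<\sqrt{3}-1.$$
Here the behaviour is delicate: both $\csc\frac{\pi}{n-4}$ and $\cot\frac{\pi}{n-6}$ grow like $\frac{n}{\pi}$, so $F(n)\to\frac{2}{\pi}\approx0.637$ from above while $\sqrt3-1\approx0.732$, and the boundary value $F(22)\approx0.7314$ sits just below $\sqrt3-1$ — this is exactly what forces the hypothesis $n\ge22$. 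I would therefore prove that $F$ is decreasing and then verify the single value $F(22)<\sqrt3-1$. Regarding $n$ as a real variable, $F'(n)=\frac{\pi}{(n-4)^2}\csc\frac{\pi}{n-4}\cot\frac{\pi}{n-4}-\frac{\pi}{(n-6)^2}\csc^2\frac{\pi}{n-6}$; bounding $\cot<\csc$ in the first term gives $\frac{\pi}{(n-4)^2}\csc^2\frac{\pi}{n-4}$, and since $n-4>n-6$ Lemma \ref{le:ch-2.4} (that $\frac{\pi}{x^2}\csc^2\frac{\pi}{x}$ is decreasing) makes this strictly smaller than the second term, so $F'(n)<0$. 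Hence $F(n)\le F(22)$ for all even $n\ge22$, and the claim follows from the boundary check.

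I expect the main obstacle to be precisely this final step. Because the gap at $n=22$ is only about $6\times10^{-4}$, the monotonicity argument must be genuinely rigorous rather than asymptotic; the point of combining the crude bound $\cot<\csc$ with Lemma \ref{le:ch-2.4} is that it yields $F'(n)<0$ cleanly, without any sharp estimate of the two growing terms, and then only one razor-thin numerical inequality, $F(22)<\sqrt3-1$, needs to be certified.
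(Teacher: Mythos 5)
Your proposal is correct and follows essentially the same route as the paper: parts (i) and (ii) via the double-angle inequalities $4\csc 2\theta>2\cot\theta>4\cot 2\theta$ at $\theta=\frac{\pi}{n-2}$ combined with the monotonicity Lemmas \ref{le:ch-2.1} and \ref{le:ch-2.2} applied to the total $n-2$, and part (iii) by reducing to $\csc\frac{\pi}{n-4}-\cot\frac{\pi}{n-6}<\sqrt{3}-1$, proving monotonicity in $n$ via the bound $\cot<\csc$ together with Lemma \ref{le:ch-2.4}, and checking the single boundary value at $n=22$. Your write-up is, if anything, slightly cleaner in treating the quantity in (iii) explicitly as a function of $n$ and in noting the parity check that makes $C_{\frac{n-2}{2}}$ an even cycle.
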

\begin{proof}
(i) By \eqref{eq:ch-3} and \eqref{eq:ch-4}, in order to prove $$E_c(C_{\frac{n-2}{2}}^-)+E_c(C_{\frac{n-2}{2}}^-)>E_c(C_2^+)+E_c(C_{n-2}^+)>E_c(C_{\frac{n-2}{2}}^+)+E_c(C_{\frac{n-2}{2}}^+),$$
we should prove
$$2\csc{\frac{2\pi}{n-2}}+2\csc{\frac{2\pi}{n-2}}>2\cot{\frac{\pi}{n-2}}>2\cot{\frac{2\pi}{n-2}}+2\cot{\frac{2\pi}{n-2}}.$$
Since $$\frac{1}{\sin{\frac{\pi}{n-2}}\cos{\frac{\pi}{n-2}}}>\frac{{\cos}^2{\frac{\pi}{n-2}}}{\sin{\frac{\pi}{n-2}}\cos{\frac{\pi}{n-2}}}>\frac{2{\cos}^2{\frac{\pi}{n-2}}-1}{\sin{\frac{\pi}{n-2}}\cos{\frac{\pi}{n-2}}}.$$
So (i) is proved.

(ii) By \eqref{eq:ch-3} and \eqref{eq:ch-4}, in order to prove $$E_c(C_{\frac{n-2}{2}-1}^-)+E_c(C_{\frac{n-2}{2}+1}^-)>E_c(C_2^+)+E_c(C_{n-2}^+)>E_c(C_{\frac{n-2}{2}-1}^+)+E_c(C_{\frac{n-2}{2}+1}^+),$$
we should prove
$$2\csc{\frac{2\pi}{n-4}}+2\csc{\frac{2\pi}{n}}>2\cot{\frac{\pi}{n-2}}>2\cot{\frac{2\pi}{n-4}}+2\cot{\frac{2\pi}{n}}.$$
By Lemma \ref{le:ch-2.2}, $f(x)=2\csc{\frac{\pi}{x}}+2\csc{\frac{\pi}{n-x}}$ is decreasing on $[2,\frac{n}{2}]$, where $n>4$. Then $$2\csc{\frac{2\pi}{n-4}}+2\csc{\frac{2\pi}{n}}>2\csc{\frac{2\pi}{n-2}}+2\csc{\frac{2\pi}{n-2}}.$$
By Lemma \ref{le:ch-2.1}, $f(x)=2\cot{\frac{\pi}{x}}+2\cot{\frac{\pi}{n-x}}$ is increasing on $[2,\frac{n}{2}]$, where $n>4$. Then $$2\cot{\frac{2\pi}{n-4}}+2\cot{\frac{2\pi}{n}}<2\cot{\frac{2\pi}{n-2}}+2\cot{\frac{2\pi}{n-2}}.$$
And by (i), (ii) is proved.

(iii) By \eqref{eq:ch-3} and \eqref{eq:ch-4}, in order to prove
$$E_c(C_6^+)+E_c(C_{n-6}^+)>E_c(C_2^-)+E_c(C_{n-4}^-)>E_c(C_4^+)+E_c(C_{n-4}^+),$$
we should prove
$$2\cot{\frac{\pi}{6}}+2\cot{\frac{\pi}{n-6}}>2\csc{\frac{\pi}{2}}+2\csc{\frac{\pi}{n-4}}>2\cot{\frac{\pi}{4}}+2\cot{\frac{\pi}{n-4}}.$$
Obviously, $$2\csc{\frac{\pi}{2}}+2\csc{\frac{\pi}{n-4}}>2\cot{\frac{\pi}{4}}+2\cot{\frac{\pi}{n-4}}.$$
In order to prove
$$2\cot{\frac{\pi}{6}}+2\cot{\frac{\pi}{n-6}}>2\csc{\frac{\pi}{2}}+2\csc{\frac{\pi}{n-4}},$$
we should prove
$$2\csc{\frac{\pi}{n-4}}-2\cot{\frac{\pi}{n-6}}<2\sqrt{3}-2.$$

Let $f(x)=2\csc{\frac{\pi}{n-4}}-2\cot{\frac{\pi}{n-6}}$, then
$$f'(x)=2(\frac{\pi}{(n-4)^2}\cos{\frac{\pi}{n-4}}{\csc}^2{\frac{\pi}{n-4}}-\frac{\pi}{(n-6)^2}{\csc}^2{\frac{\pi}{n-6}}).$$
By Lemma \ref{le:ch-2.4}, we have $\frac{\pi}{x^2}{\csc}^2{\frac{\pi}{x}}$ is decreasing on $[2,n-2]$, where $n>4$. Since $n-4>n-6$,
then $$\frac{\pi}{(n-4)^2}\cos{\frac{\pi}{n-4}}{\csc}^2{\frac{\pi}{n-4}}<\frac{\pi}{(n-4)^2}{\csc}^2{\frac{\pi}{n-4}}<\frac{\pi}{(n-6)^2}{\csc}^2{\frac{\pi}{n-6}},$$
so $f'(x)<0$, $f(x)$ is decreasing on $[2,n-2]$, where $n\geq22$. It implies $f(x) \leq f(x)_\textnormal{max}=f(22)\approx 1.463 < 2\sqrt{3}-2$.
So (iii) is proved.
\end{proof}

Now, we give our main result that the ordering of bicyclic sidigraphs with iota energy including two positive or two negative directed even cycles.

\noindent\begin{theorem}\label{th:ch-3.3}
If the bicyclic signed digraphs $\mathcal{S}_n$ with $n$ vertices ($n\geq4$) which have two vertex-disjoint signed directed even cycles, then we give the whole ordering of bicyclic sidigraphs in $\mathcal{S}_n$ with iota energy including two positive or two negative directed even cycles.

(i) When $n$ is an even number, $\frac{n}{2}$ is an even number and $n\geq 22$, we have
\begin{equation*}
\begin{split}
&E_c(C_2^-)+E_c(C_{n-2}^-)>E_c(C_4^-)+E_c(C_{n-4}^-)>\cdots>E_c(C_{\frac{n}{2}}^-)+E_c(C_{\frac{n}{2}}^-)>E_c(C_{\frac{n}{2}}^+)\\
&+E_c(C_{\frac{n}{2}}^+)>\cdots>E_c(C_6^+)+E_c(C_{n-6}^+)>E_c(C_2^-)+E_c(C_{n-4}^-)>E_c(C_4^+)+E_c(C_{n-4}^+)\\
&>E_c(C_4^-)+E_c(C_{n-6}^-)>\cdots>E_c(C_{\frac{n-2}{2}-1}^-)+E_c(C_{\frac{n-2}{2}+1}^-)>E_c(C_2^+)+E_c(C_{n-2}^+)\\
&>E_c(C_{\frac{n-2}{2}-1}^+)+E_c(C_{\frac{n-2}{2}+1}^+)>\cdots>E_c(C_6^+)+E_c(C_{n-8}^+)>E_c(C_2^-)+E_c(C_{n-6}^-)\\
&>E_c(C_4^+)+E_c(C_{n-6}^+)>E_c(C_4^-)+E_c(C_{n-8}^-)>\cdots>E_c(C_{\frac{n-4}{2}}^-)+E_c(C_{\frac{n-4}{2}}^-)>E_c(C_2^+)\\
&+E_c(C_{n-4}^+)>\cdots>E_c(C_6^+)+E_c(C_{16}^+)>E_c(C_2^-)+E_c(C_{18}^-)>E_c(C_4^+)+E_c(C_{18}^+)\\
&>E_c(C_4^-)+E_c(C_{16}^-)>\cdots>E_c(C_{10}^-)+E_c(C_{10}^-)>E_c(C_2^+)+E_c(C_{20}^+).
\end{split}
\end{equation*}

(ii) When $n$ is an odd number, $\frac{n-1}{2}$ is an even number, and $n\geq 22$, we have
\begin{equation*}
\begin{split}
&E_c(C_2^-)+E_c(C_{n-3}^-)>E_c(C_4^-)+E_c(C_{n-5}^-)>\cdots>E_c(C_{\frac{n-1}{2}}^-)+E_c(C_{\frac{n-1}{2}}^-)>E_c(C_{\frac{n-1}{2}}^+)\\
&+E_c(C_{\frac{n-1}{2}}^+)>\cdots>E_c(C_6^+)+E_c(C_{n-7}^+)>E_c(C_2^-)+E_c(C_{n-5}^-)>E_c(C_4^+)+E_c(C_{n-5}^+)\\
&>E_c(C_4^-)+E_c(C_{n-7}^-)>\cdots>E_c(C_{\frac{n-3}{2}-1}^-)+E_c(C_{\frac{n-3}{2}+1}^-)>E_c(C_2^+)+E_c(C_{n-3}^+)\\
&>E_c(C_{\frac{n-3}{2}-1}^+)+E_c(C_{\frac{n-3}{2}+1}^+)>\cdots>E_c(C_6^+)+E_c(C_{n-9}^+)>E_c(C_2^-)+E_c(C_{n-7}^-)\\
&>E_c(C_4^+)+E_c(C_{n-7}^+)>E_c(C_4^-)+E_c(C_{n-9}^-)>\cdots>E_c(C_{\frac{n-5}{2}}^-)+E_c(C_{\frac{n-5}{2}}^-)>E_c(C_2^+)\\
&+E_c(C_{n-5}^+)>\cdots>E_c(C_6^+)+E_c(C_{16}^+)>E_c(C_2^-)+E_c(C_{18}^-)>E_c(C_4^+)+E_c(C_{18}^+)\\
&>E_c(C_4^-)+E_c(C_{16}^-)>\cdots>E_c(C_{10}^-)+E_c(C_{10}^-)>E_c(C_2^+)+E_c(C_{20}^+).
\end{split}
\end{equation*}

(iii) When $n$ is an even number, $\frac{n}{2}$ is an odd number and $n\geq 22$, we have
\begin{equation*}
\begin{split}
&E_c(C_2^-)+E_c(C_{n-2}^-)>E_c(C_4^-)+E_c(C_{n-4}^-)>\cdots>E_c(C_{\frac{n}{2}-1}^-)+E_c(C_{\frac{n}{2}+1}^-)\\
&>E_c(C_{\frac{n}{2}-1}^+)+E_c(C_{\frac{n}{2}+1}^+)>\cdots>E_c(C_6^+)+E_c(C_{n-6}^+)>E_c(C_2^-)+E_c(C_{n-4}^-)\\
&>E_c(C_4^+)+E_c(C_{n-4}^+)>E_c(C_4^-)+E_c(C_{n-6}^-)>\cdots>E_c(C_{\frac{n-2}{2}}^-)+E_c(C_{\frac{n-2}{2}}^-)\\
&>E_c(C_2^+)+E_c(C_{n-2}^+)>E_c(C_{\frac{n-2}{2}}^+)+E_c(C_{\frac{n-2}{2}}^+)>\cdots>E_c(C_6^+)+E_c(C_{n-8}^+)\\
&>E_c(C_2^-)+E_c(C_{n-6}^-)>E_c(C_4^+)+E_c(C_{n-6}^+)>E_c(C_4^-)+E_c(C_{n-8}^-)>\cdots\\
&>E_c(C_{\frac{n-4}{2}-1}^-)+E_c(C_{\frac{n-4}{2}+1}^-)>E_c(C_2^+)+E_c(C_{n-4}^+)>\cdots>E_c(C_6^+)+E_c(C_{16}^+)\\
&>E_c(C_2^-)+E_c(C_{18}^-)>E_c(C_4^+)+E_c(C_{18}^+)>E_c(C_4^-)+E_c(C_{16}^-)>\cdots>E_c(C_{10}^-)\\
&+E_c(C_{10}^-)>E_c(C_2^+)+E_c(C_{20}^+).
\end{split}
\end{equation*}

(iv) When $n$ is an odd number, $\frac{n-1}{2}$ is an odd number and $n\geq 22$, we have
\begin{equation*}
\begin{split}
&E_c(C_2^-)+E_c(C_{n-3}^-)>E_c(C_4^-)+E_c(C_{n-5}^-)>\cdots>E_c(C_{\frac{n-1}{2}-1}^-)+E_c(C_{\frac{n-1}{2}+1}^-)\\
&>E_c(C_{\frac{n-1}{2}-1}^+)+E_c(C_{\frac{n-1}{2}+1}^+)>\cdots>E_c(C_6^+)+E_c(C_{n-7}^+)>E_c(C_2^-)+E_c(C_{n-5}^-)\\
&>E_c(C_4^+)+E_c(C_{n-5}^+)>E_c(C_4^-)+E_c(C_{n-7}^-)>\cdots>E_c(C_{\frac{n-3}{2}}^-)+E_c(C_{\frac{n-3}{2}}^-)\\
&>E_c(C_2^+)+E_c(C_{n-3}^+)>E_c(C_{\frac{n-3}{2}}^+)+E_c(C_{\frac{n-3}{2}}^+)>\cdots>E_c(C_6^+)+E_c(C_{n-9}^+)\\
&>E_c(C_2^-)+E_c(C_{n-7}^-)>E_c(C_4^+)+E_c(C_{n-7}^+)>E_c(C_4^-)+E_c(C_{n-9}^-)>\cdots\\
&>E_c(C_{\frac{n-5}{2}-1}^-)+E_c(C_{\frac{n-5}{2}+1}^-)>E_c(C_2^+)+E_c(C_{n-5}^+)>\cdots>E_c(C_6^+)+E_c(C_{16}^+)\\
&>E_c(C_2^-)+E_c(C_{18}^-)>E_c(C_4^+)+E_c(C_{18}^+)>E_c(C_4^-)+E_c(C_{16}^-)>\cdots>E_c(C_{10}^-)\\
&+E_c(C_{10}^-)>E_c(C_2^+)+E_c(C_{20}^+).
\end{split}
\end{equation*}

(v) When $n\leq 22$, we have
\begin{equation*}
\begin{split}
&E_c(C_2^+)+E_c(C_{20}^+)>E_c(C_{10}^+)+E_c(C_{10}^+)>E_c(C_8^+)+E_c(C_{12}^+)>E_c(C_2^-)+E_c(C_{16}^-)\\
&>E_c(C_6^+)+E_c(C_{14}^+)>E_c(C_4^+)+E_c(C_{16}^+)>E_c(C_4^-)+E_c(C_{14}^-)>E_c(C_6^-)+E_c(C_{12}^-)\\
&>E_c(C_8^-)+E_c(C_{10}^-)>E_c(C_2^+)+E_c(C_{18}^+)>E_c(C_2^-)+E_c(C_{14}^-)>E_c(C_8^+)+E_c(C_{10}^+)\\
&>E_c(C_6^+)+E_c(C_{12}^+)>E_c(C_4^+)+E_c(C_{14}^+)>E_c(C_4^-)+E_c(C_{12}^-)>E_c(C_6^-)+E_c(C_{10}^-)\\
&>E_c(C_8^-)+E_c(C_8^-)>E_c(C_2^+)+E_c(C_{16}^+)>E_c(C_2^-)+E_c(C_{12}^-)>E_c(C_8^+)+E_c(C_8^+)\\
&>E_c(C_6^+)+E_c(C_{10}^+)>E_c(C_4^+)+E_c(C_{12}^+)>E_c(C_4^-)+E_c(C_{10}^-)>E_c(C_6^-)+E_c(C_8^-)\\
&>E_c(C_2^+)+E_c(C_{14}^+)>E_c(C_2^-)+E_c(C_{10}^-)>E_c(C_6^+)+E_c(C_8^+)>E_c(C_4^+)+E_c(C_{10}^+)\\
&>E_c(C_4^-)+E_c(C_8^-)>E_c(C_6^-)+E_c(C_6^-)>E_c(C_2^+)+E_c(C_{12}^+)>E_c(C_2^-)+E_c(C_8^-)\\
&>E_c(C_6^+)+E_c(C_6^+)>E_c(C_4^+)+E_c(C_8^+)=E_c(C_4^-)+E_c(C_6^-)>E_c(C_2^+)+E_c(C_{10}^+)\\
&>E_c(C_2^-)+E_c(C_6^-)>E_c(C_4^-)+E_c(C_4^-)>E_c(C_4^+)+E_c(C_6^+)>E_c(C_2^+)+E_c(C_8^+)\\
&=E_c(C_2^-)+E_c(C_4^-)>E_c(C_4^+)+E_c(C_4^+)=E_c(C_2^-)+E_c(C_2^-)>E_c(C_2^+)+E_c(C_6^+)\\
&>E_c(C_2^+)+E_c(C_4^+)>E_c(C_2^+)+E_c(C_2^+).
\end{split}
\end{equation*}
\end{theorem}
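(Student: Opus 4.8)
\noindent The plan is to reduce every entry of the displayed chains to the two closed forms of Eq.~\eqref{eq:ch-3} and Eq.~\eqref{eq:ch-4}. Since each $S\in\mathcal{S}_n$ satisfies $E_c(S)=E_c(C_{r_1})+E_c(C_{r_2})$ with $r_1,r_2$ even and $r_1+r_2\le n$, every value in the statement is either a two-positive sum $2\cot\frac{\pi}{r_1}+2\cot\frac{\pi}{r_2}$ or a two-negative sum $2\csc\frac{\pi}{r_1}+2\csc\frac{\pi}{r_2}$. First I would organize these values into \emph{blocks} indexed by the common total $L=r_1+r_2$, an even integer ranging from $4$ up to $n$ (for $n$ even) or $n-1$ (for $n$ odd). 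The four cases (i)--(iv) merely record which chain results from the parity of $n$ together with the parity of $\lfloor n/2\rfloor$; that parity only fixes whether the balanced member of a block is a single term $C_{L/2}+C_{L/2}$ (when $4\mid L$) or a near-balanced pair $C_{L/2-1}+C_{L/2+1}$ (when $L\equiv2\pmod4$).

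Next I would settle the order \emph{within} each block. For fixed $L$, Lemma~\ref{le:ch-3.1} (through Lemmas~\ref{le:ch-2.1} and~\ref{le:ch-2.2}) supplies the complete internal chain: the two-negative entries decrease from the extreme $C_2^-+C_{L-2}^-$ down to the balanced negative member; that member beats the balanced positive member by the elementary $\csc>\cot$; and then the two-positive entries decrease back out to the block minimum $C_2^++C_{L-2}^+$. In particular the largest value in a block is its most unbalanced negative pair and the smallest is its most unbalanced positive pair. Here the relevant sub-case of Lemma~\ref{le:ch-3.1} is chosen according to the parity of $L/2$, so both sub-cases occur within a single chain as $L$ runs over its range.

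The heart of the argument is to glue consecutive blocks, and Lemma~\ref{le:ch-3.2} furnishes exactly the two bridges needed. Applied at parameter $L$, the ``top'' bridge Lemma~\ref{le:ch-3.2}(iii) inserts the most unbalanced negative pair of block $L-2$ strictly between $C_6^++C_{L-6}^+$ and $C_4^++C_{L-4}^+$ of block $L$; and the ``bottom'' bridge Lemma~\ref{le:ch-3.2}(i)--(ii) inserts the block minimum $C_2^++C_{L-2}^+$ strictly between the near-balanced negative and near-balanced positive members of block $L-2$ (using (i) when $L/2$ is odd and (ii) when $L/2$ is even). Iterating these two bridges with $L$ descending in steps of $2$ produces a self-similar braiding in which the positive tail of each block is threaded by the entries of the two blocks just below it. I would then verify that every single ``$>$'' in the stated chain is an instance of Lemma~\ref{le:ch-3.1}, of one of the two bridges of Lemma~\ref{le:ch-3.2}, or of the trivial $\csc>\cot$ step, and that consecutive entries are never more than one block apart, so that transitivity handles all longer-range comparisons. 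Since Lemma~\ref{le:ch-3.2}(iii) is proved only for $n\ge22$, the braiding is guaranteed to persist down to the block $L\approx22$, which is precisely where the hypothesis $n\ge22$ of cases (i)--(iv) is consumed and where the chain hands off to the common tail of case (v).

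Finally, case (v) is a finite verification: for $n\le22$ only finitely many admissible pairs remain, and evaluating each $2\cot\frac{\pi}{r}$ or $2\csc\frac{\pi}{r}$ confirms the strict inequalities together with the recorded ties, which are genuine identities rather than numerical accidents (for instance $E_c(C_4^+)+E_c(C_8^+)=4+2\sqrt2=E_c(C_4^-)+E_c(C_6^-)$ and $E_c(C_4^+)+E_c(C_4^+)=4=E_c(C_2^-)+E_c(C_2^-)$, using $\cot\frac{\pi}{2}=0$, $\cot\frac{\pi}{4}=1$, and $\cot\frac{\pi}{8}=1+\sqrt2$). I expect the main obstacle to be not any individual trigonometric estimate but the combinatorial bookkeeping of the braiding: one must show that the two bridge patterns, applied uniformly, leave no adjacent pair unexamined, never reverse an already-established inequality, and assemble the local comparisons into a single consistent total order. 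Verifying this global consistency is the real work.
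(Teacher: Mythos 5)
Your plan is, in substance, exactly the paper's proof: the paper disposes of (i)--(iv) with the single sentence ``By Lemma~\ref{le:ch-3.1} and Lemma~\ref{le:ch-3.2} \dots'' and does not argue (v) at all, so your block-and-bridge elaboration is a faithful (and far more explicit) reconstruction of what the authors must have intended, and your sample identities for the ties in (v) are correct. However, your own acceptance criterion --- that ``every single $>$ in the stated chain is an instance of Lemma~\ref{le:ch-3.1}, of one of the two bridges of Lemma~\ref{le:ch-3.2}, or of the trivial $\csc>\cot$ step'' --- fails at one recurring adjacent comparison, and this is a genuine gap (shared with the paper). In chain (i) the step
$$E_c(C_4^+)+E_c(C_{n-4}^+)>E_c(C_4^-)+E_c(C_{n-6}^-),$$
i.e.\ the hand-off from the last-but-one positive entry of block $L=n$ to the \emph{second} negative entry of block $L=n-2$, is not within a block (so Lemma~\ref{le:ch-3.1} does not apply), is not either half of Lemma~\ref{le:ch-3.2}(iii) (which only places $C_2^-+C_{n-4}^-$ above $C_4^++C_{n-4}^+$, saying nothing about $C_4^-+C_{n-6}^-$), is not the bottom bridge, and cannot be recovered by transitivity since the two terms are adjacent in the asserted total order and every lemma-covered element lies on the wrong side. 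The same uncovered comparison reappears at every block transition ($E_c(C_4^+)+E_c(C_{n-6}^+)>E_c(C_4^-)+E_c(C_{n-8}^-)$, etc.) and in all four cases (i)--(iv).

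The inequality itself is true: it is equivalent to $2\cot\frac{\pi}{n-4}-2\csc\frac{\pi}{n-6}>2\sqrt{2}-2$, and the left side is about $1.09$ at $n=22$ and tends to $4/\pi\approx1.27$, but the margin is small and the claim needs its own monotonicity argument of exactly the kind used to prove Lemma~\ref{le:ch-3.2}(iii) (where the authors bounded $2\csc\frac{\pi}{n-4}-2\cot\frac{\pi}{n-6}$ by $2\sqrt{3}-2$ via Lemma~\ref{le:ch-2.4}). So your proposal needs one additional bridge lemma, parallel to Lemma~\ref{le:ch-3.2}(iii), before the braiding closes up; with that added, the rest of your bookkeeping goes through as described.
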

\begin{proof}
By Lemma \ref{le:ch-3.1} and Lemma \ref{le:ch-3.2}, we get the orderings (i) and (iii) of bicyclic sidigraphs in $\mathcal{S}_n$ with iota energy. The proof of ordering (ii) ((iv)) are similar to (i) ((iii)) by changing $n$ in (i) ((iii)) into $n-1$.
\end{proof}

Next, we consider the iota energy of bicyclic sidigraphs with one positive and one negative directed even cycles.

\noindent\begin{lemma}\label{le:ch-3.4}
Let $C_m^+$ and $C_{n-m}^+$ ($C_m^-$ and $C_{n-m}^-$) be vertex-disjoint positive (negative) directed even cycles, where $n$ and $m$ are all even numbers, $n> 4$ and $2 \leq m \leq n-2$. Then we have
\begin{equation}
\begin{split}\label{eq:ch-7}
E_c(C_2^-)+E_c(C_{n-2}^+)>E_c(C_4^-)+E_c(C_{n-4}^+)>\cdots>E_c(C_{n-4}^-)+E_c(C_4^+)>E_c(C_{n-2}^-)+E_c(C_2^+).
\end{split}
\end{equation}
\end{lemma}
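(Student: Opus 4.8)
The plan is to collapse the whole chain of inequalities into a single monotonicity statement, namely Lemma \ref{le:ch-2.3}. First I would rewrite each summand in closed form. Because both $m$ and $n-m$ are even, equations \eqref{eq:ch-4} and \eqref{eq:ch-3} give $E_c(C_m^-)=2\csc\frac{\pi}{m}$ and $E_c(C_{n-m}^+)=2\cot\frac{\pi}{n-m}$, so that
$$E_c(C_m^-)+E_c(C_{n-m}^+)=2\csc\frac{\pi}{m}+2\cot\frac{\pi}{n-m}.$$
This is exactly the function $f(x)=2\csc\frac{\pi}{x}+2\cot\frac{\pi}{n-x}$ appearing in Lemma \ref{le:ch-2.3}, evaluated at $x=m$. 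The key observation is that here the \emph{negative} cycle contributes the cosecant term and the \emph{positive} cycle the cotangent term, which is precisely the combination handled by that lemma.

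Next I would invoke Lemma \ref{le:ch-2.3}, which states that $f$ is decreasing on the entire interval $[2,n-2]$ for $n>4$. The indices relevant to the claimed ordering, $m=2,4,6,\ldots,n-2$, form an increasing sequence of even integers all lying in $[2,n-2]$. Since $f$ is strictly decreasing there, we obtain $f(2)>f(4)>\cdots>f(n-2)$, which is exactly
$$E_c(C_2^-)+E_c(C_{n-2}^+)>E_c(C_4^-)+E_c(C_{n-4}^+)>\cdots>E_c(C_{n-2}^-)+E_c(C_2^+),$$
the desired conclusion \eqref{eq:ch-7}.

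Since the genuine analytic difficulty, establishing that this cosecant-plus-cotangent combination is monotone, has already been dispatched in Lemma \ref{le:ch-2.3}, I do not anticipate any real obstacle. The only points requiring care are purely bookkeeping: verifying that the cycle-energy formulas are applied in the correct parity branch (both arguments even, hence the cosecant/cotangent forms rather than the odd-case $\cot\frac{\pi}{2n}$ form), and confirming that as the running index $m$ increases through the even values the matching index $n-m$ decreases correspondingly, so that the substitution $x=m$ is legitimate across the full range. With these checks in place the proof reduces to a one-line appeal to the preceding lemma.
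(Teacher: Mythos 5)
Your proof is correct and follows essentially the same route as the paper: both reduce each sum to $2\csc\frac{\pi}{m}+2\cot\frac{\pi}{n-m}$ via Eqs.~\eqref{eq:ch-3} and \eqref{eq:ch-4} and then apply the monotonicity of Lemma~\ref{le:ch-2.3} on $[2,n-2]$. The paper additionally remarks that the companion function $2\cot\frac{\pi}{x}+2\csc\frac{\pi}{n-x}$ is increasing so that one parametrization suffices, which you handle implicitly by indexing everything by the negative cycle's length.
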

\begin{proof}
Firstly, by Eq.\eqref{eq:ch-3} and Eq.\eqref{eq:ch-4}, we have
$$E_c(C_m^+)+E_c(C_{n-m}^-)=2\cot{\frac{\pi}{m}}+2\csc{\frac{\pi}{n-m}},$$
$$E_c(C_m^-)+E_c(C_{n-m}^+)=2\csc{\frac{\pi}{m}}+2\cot{\frac{\pi}{n-m}}.$$

Next, by Lemma \ref{le:ch-2.3}, $f(x)=2\csc{\frac{\pi}{x}}+2\cot{\frac{\pi}{n-x}}$ is decreasing on $[2,n-2]$, where $n>4$. Similarly, $f(x)=2\cot{\frac{\pi}{x}}+2\csc{\frac{\pi}{n-x}}$ is increasing on $[2,n-2]$, where $n>4$. So we only consider $E_c(C_m^-)+E_c(C_{n-m}^+)=2\csc{\frac{\pi}{m}}+2\cot{\frac{\pi}{n-m}}$, where $n>4$ and $2 \leq m \leq n-2$. Hence the ordering of \eqref{eq:ch-7} holds.
\end{proof}

\noindent\begin{remark}\label{re:ch-3.5} By calculation, for $n$ is an even number, we find

(i) If $10 \leq n \leq 16$, then
$$E_c(C_2^-)+E_c(C_{n-4}^+)>E_c(C_{n-2}^-)+E_c(C_2^+)>E_c(C_4^-)+E_c(C_{n-6}^+);$$

(ii) If $18 \leq n \leq 22$, then
$$E_c(C_4^-)+E_c(C_{n-6}^+)>E_c(C_{n-2}^-)+E_c(C_2^+)>E_c(C_6^-)+E_c(C_{n-8}^+);$$

(iii) If $24 \leq n \leq 30$, then
$$E_c(C_6^-)+E_c(C_{n-8}^+)>E_c(C_{n-2}^-)+E_c(C_2^+)>E_c(C_8^-)+E_c(C_{n-10}^+);$$

(iv) If $32 \leq n \leq 38$, then
$$E_c(C_8^-)+E_c(C_{n-10}^+)>E_c(C_{n-2}^-)+E_c(C_2^+)>E_c(C_{10}^-)+E_c(C_{n-12}^+);$$

(v) If $40 \leq n \leq 48$, then
$$E_c(C_{10}^-)+E_c(C_{n-12}^+)>E_c(C_{n-2}^-)+E_c(C_2^+)>E_c(C_{12}^-)+E_c(C_{n-14}^+);$$

And so on.

We can't find an uniform rule, so the ordering of bicyclic sidigraphs in $\mathcal{S}_n$ with iota energy including one positive and one negative directed even cycles, we ignore $E_c(C_2^+)+E_c(C_{n-2}^-)$.
\end{remark}

Now, we give our main result that the ordering of bicyclic sidigraphs with iota energy including one positive and one negative directed even cycles.

\noindent\begin{theorem}\label{th:ch-3.6}
If the bicyclic signed digraphs $\mathcal{S}_n$ with $n$ vertices ($n\geq4$) which have two vertex-disjoint signed directed even cycles, then except $C_2^+$, we give the ordering of bicyclic sidigraphs in $\mathcal{S}_n$ with iota energy including one positive and one negative directed even cycles.

(i) When $n$ is an even number, we have
\begin{equation*}
\begin{split}
&E_c(C_2^-)+E_c(C_{n-2}^+)>E_c(C_4^-)+E_c(C_{n-4}^+)>\cdots>E_c(C_{n-4}^-)+E_c(C_4^+)\\
&>E_c(C_2^-)+E_c(C_{n-4}^+)>E_c(C_4^-)+E_c(C_{n-6}^+)>\cdots>E_c(C_{n-6}^-)+E_c(C_4^+)\\
&>E_c(C_2^-)+E_c(C_{n-6}^+)>\cdots>E_c(C_2^-)+E_c(C_2^+).
\end{split}
\end{equation*}

(ii) When $n$ is an odd number, we have
\begin{equation*}
\begin{split}
&E_c(C_2^-)+E_c(C_{n-3}^+)>E_c(C_4^-)+E_c(C_{n-5}^+)>\cdots>E_c(C_{n-5}^-)+E_c(C_4^+)\\
&>E_c(C_2^-)+E_c(C_{n-5}^+)>E_c(C_4^-)+E_c(C_{n-7}^+)>\cdots>E_c(C_{n-7}^-)+E_c(C_4^+)\\
&>E_c(C_2^-)+E_c(C_{n-7}^+)>\cdots>E_c(C_2^-)+E_c(C_2^+).
\end{split}
\end{equation*}
\end{theorem}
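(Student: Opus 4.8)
The plan is to reduce the statement to the single-diagonal ordering of Lemma \ref{le:ch-3.4} supplemented by one elementary transition inequality. First I would record from Eq.\eqref{eq:ch-3} and Eq.\eqref{eq:ch-4} that, for even $a$ and $b$, a configuration consisting of one negative cycle $C_a^-$ and one positive cycle $C_b^+$ has iota energy $2\csc\frac{\pi}{a}+2\cot\frac{\pi}{b}$. Since $E_c(S)$ depends only on the two cycle orders and the cycles are vertex-disjoint, the admissible configurations are exactly the pairs $(a,b)$ with $a,b$ even, $a,b\geq 2$, and $a+b\leq n$. I would organize these pairs into diagonals indexed by the even sum $s=a+b$, where $s$ ranges over the even integers from $4$ up to the largest even number not exceeding $n$.

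Within a fixed diagonal $s$, Lemma \ref{le:ch-3.4} applied with $n$ replaced by $s$ gives the strictly decreasing chain from $E_c(C_2^-)+E_c(C_{s-2}^+)$ down through $E_c(C_{s-4}^-)+E_c(C_4^+)$ and finally to $E_c(C_{s-2}^-)+E_c(C_2^+)$. Following the $C_2^+$ exclusion advertised in the statement, I would discard this final term; since $E_c(C_2^+)=2\cot\frac{\pi}{2}=0$, it degenerates to the pure value $2\csc\frac{\pi}{s-2}$, whose position relative to the retained terms is exactly the non-uniform quantity analysed in Remark \ref{re:ch-3.5}, which is why it is left out. The last retained term of diagonal $s$ is then $E_c(C_{s-4}^-)+E_c(C_4^+)$, and the first term of diagonal $s-2$ is $E_c(C_2^-)+E_c(C_{s-4}^+)$.

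The core analytic step is the transition inequality $E_c(C_{s-4}^-)+E_c(C_4^+)>E_c(C_2^-)+E_c(C_{s-4}^+)$, linking the bottom of diagonal $s$ to the top of diagonal $s-2$. Substituting the closed forms and using $\cot\frac{\pi}{4}=\csc\frac{\pi}{2}=1$, this collapses to $\csc\frac{\pi}{s-4}>\cot\frac{\pi}{s-4}$, that is $1>\cos\frac{\pi}{s-4}$, which holds for every $s\geq 6$ (and in the boundary case $s=6$ it reads $1>\cos\frac{\pi}{2}=0$, so the singleton diagonal $s=4$, namely $E_c(C_2^-)+E_c(C_2^+)$, is the global minimum of the retained family). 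Because each diagonal is internally decreasing and its minimum exceeds the maximum of the next diagonal, transitivity splices all the diagonal chains into the single descending order of part (i). The odd-$n$ case (ii) then follows verbatim after replacing $n$ by $n-1$, exactly as (iii) is obtained from (i) in Theorem \ref{th:ch-3.3}.

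I expect the genuine difficulty to lie not in any single inequality --- every transition reduces to $1>\cos\theta$ --- but in the bookkeeping forced by $C_2^+$. Because $E_c(C_2^+)=0$, each diagonal carries a degenerate $C_2^+$ term whose rank among the retained entries drifts with $n$, as Remark \ref{re:ch-3.5} documents; the careful part is to verify that deleting all of these terms except the minimal pair $(2,2)$ leaves a family that is still exhaustively and totally ordered by the diagonal-plus-transition scheme, with no retained entry of a lower diagonal ever overtaking one of a higher diagonal. That last property is precisely what transitivity delivers once the above trivial trigonometric fact is established, so the proof ultimately amounts to assembling Lemma \ref{le:ch-3.4} with that single inequality.
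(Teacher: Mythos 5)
Your proposal is correct and follows essentially the same route as the paper: the within-diagonal chains come from Lemma \ref{le:ch-3.4}, the exclusion of the drifting $C_2^+$ terms is justified by Remark \ref{re:ch-3.5}, and the splice between consecutive diagonals is exactly the paper's inequality $2\csc{\frac{\pi}{n-4}}+2\cot{\frac{\pi}{4}}>2\csc{\frac{\pi}{2}}+2\cot{\frac{\pi}{n-4}}$, which you correctly generalize to every diagonal sum $s$ and reduce to $1>\cos{\frac{\pi}{s-4}}$. The only difference is one of completeness: you spell out the transition inequality for all diagonals, whereas the paper records only the first instance and leaves the remaining ones implicit.
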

\begin{proof}
Obviously, we have $2\csc{\frac{\pi}{n-4}}+2\cot{\frac{\pi}{4}}>2\csc{\frac{\pi}{2}}+2\cot{\frac{\pi}{n-4}}$, together with Lemma \ref{le:ch-3.4} and Remark \ref{re:ch-3.5}, we get the ordering (i) of bicyclic sidigraphs in $\mathcal{S}_n$ with iota energy. The proof of ordering (ii) is similar to (i) by changing $n$ in (i) into $n-1$.
\end{proof}

From Theorem \ref{th:ch-3.3} and Theorem \ref{th:ch-3.6}, We can get next Theorem.

\noindent\begin{theorem}\label{th:ch-3.7}
If the bicyclic signed digraphs $\mathcal{S}_n$ with $n$ vertices ($n\geq4$) which have two vertex-disjoint signed directed even cycles $C_{r_1}$ and $C_{r_2}$, where $2\leq r_i\leq {n-2}$ and $r_i$ is an even number $(i=1,2)$, then we have

(i) When $n$ is an even number, $S$ has maximal iota energy if the two vertex-disjoint signed directed even cycles are $C_2^-$ and $C_{n-2}^-$.

(ii) When $n$ is an odd number, $S$ has maximal iota energy if the two vertex-disjoint signed directed even cycles are $C_2^-$ and $C_{n-3}^-$.

(iii) $S$ has minimal iota energy if the two vertex-disjoint signed directed even cycles are both $C_2^+$.
\end{theorem}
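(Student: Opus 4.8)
The plan is to read off the two extremal configurations directly from the global orderings already established, treating the three sign patterns of the pair $(C_{r_1},C_{r_2})$ separately and then comparing across them. Since $C_{r_1}$ and $C_{r_2}$ are exactly the strong components of $S$, we have $E_c(S)=E_c(C_{r_1})+E_c(C_{r_2})$, so every admissible $S$ falls into one of three classes: both cycles negative, both cycles positive, or one of each. Theorem \ref{th:ch-3.3} totally orders the first two classes, and Theorem \ref{th:ch-3.6} (built on Lemma \ref{le:ch-3.4}) orders the mixed class; it therefore suffices to compare the largest, respectively smallest, element of each class.

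For the maximum with $n$ even, the top term of the ordering in Theorem \ref{th:ch-3.3} is $E_c(C_2^-)+E_c(C_{n-2}^-)=2+2\csc\frac{\pi}{n-2}$, while the top term of the mixed ordering in Lemma \ref{le:ch-3.4} is $E_c(C_2^-)+E_c(C_{n-2}^+)=2+2\cot\frac{\pi}{n-2}$. I would finish by invoking $\csc\theta>\cot\theta$ on $(0,\frac{\pi}{2})$ with $\theta=\frac{\pi}{n-2}$ to conclude that the former strictly dominates, so the global maximizer is the pair $C_2^-, C_{n-2}^-$. The odd case follows verbatim after replacing $n$ by $n-1$, since two vertex-disjoint even cycles occupy at most $n-1$ vertices when $n$ is odd, which yields the maximizer $C_2^-, C_{n-3}^-$.

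For the minimum I would argue by nonnegativity. From Eq.\eqref{eq:ch-3} one has $E_c(C_2^+)=2\cot\frac{\pi}{2}=0$, whereas $E_c(C_r^-)=2\csc\frac{\pi}{r}\ge 2$ for every even $r$ and $E_c(C_r^+)=2\cot\frac{\pi}{r}>0$ for every even $r\ge 4$. Hence $C_2^+$ is the unique even directed cycle with vanishing iota energy, and the sum $E_c(C_{r_1})+E_c(C_{r_2})$ attains its least value $0$ precisely when both cycles are $C_2^+$; this configuration is realizable for every $n\ge 4$ because the two cycles use only four vertices. Consistently, this is also the bottom term of the ordering in Theorem \ref{th:ch-3.3}(v).

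The only genuine subtlety, and the step I expect to be the main obstacle, is that Remark \ref{re:ch-3.5} forced Theorem \ref{th:ch-3.6} to discard the mixed configurations containing $C_2^+$, so these must be checked by hand rather than read off an ordering. For the maximum this is harmless: the largest discarded term is $E_c(C_{n-2}^-)+E_c(C_2^+)=2\csc\frac{\pi}{n-2}$, which is strictly below the claimed maximum $2+2\csc\frac{\pi}{n-2}$. For the minimum, every mixed pair contains a negative cycle contributing at least $2$, so no mixed pair can undercut the value $0$ of $C_2^+, C_2^+$. One should also note that configurations in which the two cycle lengths sum to less than $n$ only decrease both $\csc\frac{\pi}{r}$ and $\cot\frac{\pi}{r}$, since both are increasing in $r$, so they cannot improve either extremum; this confirms that restricting attention to the endpoints of the three orderings loses nothing.
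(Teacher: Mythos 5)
Your proof is correct and follows essentially the same route as the paper: read off the top (resp.\ bottom) elements of the orderings in Theorem~\ref{th:ch-3.3} and Theorem~\ref{th:ch-3.6} and compare them via $\csc\theta>\cot\theta$ on $(0,\frac{\pi}{2})$. You are in fact slightly more careful than the paper, which silently passes over the mixed pairs containing $C_2^+$ that Remark~\ref{re:ch-3.5} excluded from Theorem~\ref{th:ch-3.6}: your explicit check that $E_c(C_{n-2}^-)+E_c(C_2^+)=2\csc\frac{\pi}{n-2}$ lies below the claimed maximum, together with your direct nonnegativity argument for the minimum, closes that small gap.
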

\begin{proof}
(i) When $n$ is an even number, by (i) and (iii) of Theorem \ref{th:ch-3.3}, the maximal iota energy is $E_c(C_2^-)+E_c(C_{n-2}^-)=2\csc{\frac{\pi}{2}}+2\csc{\frac{\pi}{n-2}}$, and by (i) of Theorem \ref{th:ch-3.6}, the maximal iota energy is $E_c(C_2^-)+E_c(C_{n-2}^+)=2\csc{\frac{\pi}{2}}+2\cot{\frac{\pi}{n-2}}$. Obviously, $2\csc{\frac{\pi}{2}}+2\csc{\frac{\pi}{n-2}}>2\csc{\frac{\pi}{2}}+2\cot{\frac{\pi}{n-2}}$, so $S$ has maximal iota energy if the two vertex-disjoint signed directed even cycles are $C_2^-$ and $C_{n-2}^-$.

The proofs of (ii) and (iii) are analogous.
\end{proof}

\noindent\begin{remark}\label{re:ch-3.8}
Actually, by Theorems 2.12 of \cite{FaCK}, we can get more general situation of (i) and (iii) of Theorem \ref{th:ch-3.7}.
\end{remark}

Finally, we give an example about two orderings and the extremal iota energy of bicyclic signed digraphs $\mathcal{S}_{27}$.

\noindent\begin{example}\label{ex:ch-3.9}
If the bicyclic signed digraphs $\mathcal{S}_{27}$ with $27$ vertices which have two vertex-disjoint signed directed even cycles $C_{r_1}$ and $C_{r_2}$, where $2\leq r_i\leq {25}$ and $r_i$ is an even number $(i=1,2)$. Then the whole ordering of bicyclic sidigraphs in $\mathcal{S}_{27}$ with iota energy including two positive or two negative directed even cycles see Figure \ref{fi:ch-1}. And the ordering of bicyclic sidigraphs in $\mathcal{S}_{27}$ with iota energy including one positive and one negative directed even cycles (except $C_2^+$) see Figure \ref{fi:ch-2}.

Moreover, when the two vertex-disjoint signed directed even cycles are $C_2^-$ and $C_{24}^-$, the maximal iota energy of $\mathcal{S}_{27}$ is $E_c(C_2^-)+E_c(C_{24}^-)=17.32$. When the two vertex-disjoint signed directed even cycles are both $C_2^+$, the minimal iota energy of $\mathcal{S}_{27}$ is $E_c(C_2^+)+E_c(C_2^+)=0$.
\end{example}

\begin{figure}[htbp]
\begin{centering}
\includegraphics[scale=0.45]{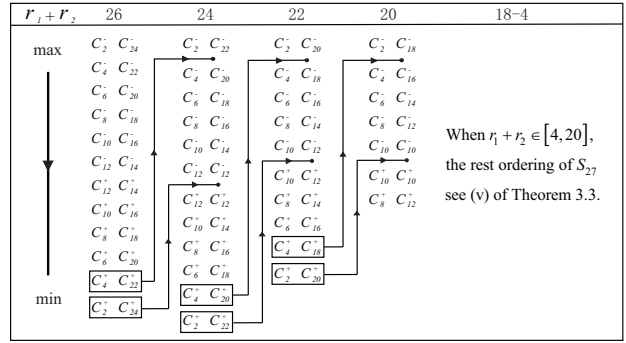}
\caption{The whole ordering of bicyclic sidigraphs in $\mathcal{S}_{27}$ with iota energy including two positive or two negative directed even cycles}\label{fi:ch-1}
\end{centering}
\end{figure}
\begin{figure}[htbp]
\begin{centering}
\includegraphics[scale=0.45]{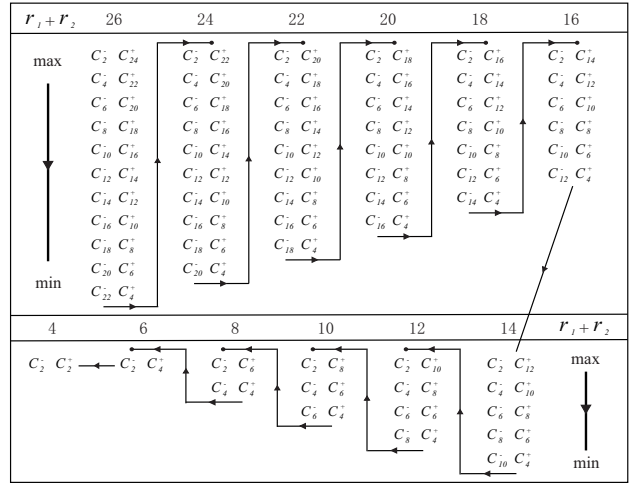}
\caption{The ordering of bicyclic sidigraphs in $\mathcal{S}_{27}$ with iota energy including one positive and one negative directed even cycles (except $C_2^+$)}\label{fi:ch-2}
\end{centering}
\end{figure}

\end{document}